\theoremstyle{plain}
\newtheorem{theorem}[equation]{Theorem}
\newtheorem{lemma}[equation]{Lemma}
\newtheorem*{theorem*}{Theorem}
\newtheorem{corollary}[equation]{Corollary}
\theoremstyle{definition}
\newtheorem{definition}[equation]{Definition}
\newtheorem{instance}[equation]{Example}
\newtheorem{situation}[equation]{}
\newtheorem*{situation*}{}
\newtheorem{remark}[equation]{Remark}
\setlist[itemize]{leftmargin=2em}
\numberwithin{equation}{section}
\keywords{A-hypergeometric system. Toric variety. D-module.}
\subjclass{14F10; 14M25}
\title[Relative cohomology]{\(A\)-hypergeometric systems and relative cohomology}
\author{Tsung-Ju Lee}
\address{T.-J.~Lee: Center of Mathematical Sciences and Applications, 20 Garden St., Cambridge, MA 02138.}
\email{tjlee@cmsa.fas.harvard.edu}
\author{Dingxin Zhang}
\address{D.~Zhang: Yau Mathematical Sciences Center, Tsinghua University, Beijing 100084, China.}
\email{dingxin@tsinghua.edu.cn}
\begin{document}
\begin{abstract}
We investigate the solution space to certain \(A\)-hypergeometric
\(\mathscr{D}\)-modules, which were defined and studied by Gelfand,
Kapranov, and Zelevinsky. We show that the solution space can be
identified with certain relative cohomology group of the toric
variety determined by \(A\), which generalizes the results of
Huang, Lian, Yau, and Zhu. As a corollary, we also prove the
existence of rank one points for complete intersections
in toric varieties.
\end{abstract}

\maketitle

\section{Introduction}
An \(A\)-hypergeometric system, or a GKZ-system, is a certain holonomic system
of linear partial differential equations determined by some combinatoric data. This
notion was introduced by
Gelfand--Kapranov--Zelevinsky~\cite{1989-Gelfand-Kapranov-Zelebinski-hypergeometric-functions-and-toral-manifolds}.
It has a variety of applications in algebra, algebraic geometry, and mirror
symmetry.

The purpose of this note is to give a cohomological interpretation of the space
of local solutions to certain \(A\)-hypergeometric systems related to ``period integrals''. Our theorem
generalizes a result of
Huang--Lian--Yau--Zhu~\cite{2016-Huang-Lian-Yau-Zhu-chain-integral-solutions-to-tautological-systems}.
The theorem of Huang et.~al.~states that, for the special \(A\)-hypergeometric
system associated with period integrals of \emph{Calabi--Yau} hypersurfaces
(see Remark~\ref{remark:toric-cy-complete-intersections}) in a \emph{smooth}
projective toric variety, its solution space near a point \(x\) can be
identified with a relative homology group
\(\mathrm{H}_n(X\setminus{Y}_{x},(X\setminus Y_{x}) \cap D)\), where \(D\) is
the union of all torus-invariant divisors of \(X\), and \(Y_{x}\) is a (possibly
singular) Calabi--Yau hypersurface associated with the point \(x\).

In this note, we relax both the Calabi--Yau condition and the
smoothness assumption. We also give a uniform treatment to
\(A\)-hypergeometric systems associated with complete intersections in a
projective toric variety. In terms of the geometry of toric varieties, a special
case of our result can be stated as follows.

\begin{theorem*}
Let \((X,L)\) be a polarized toric variety of dimension \(n\) defined by a
convex polytope \(\Delta \subset \mathbb{Z}^{n}\) containing \(\mathbf{0}\) as its interior point. Let
\(L=L_1\otimes\cdots\otimes L_{r}\) be a base-point-free decomposition of \(L\). We consider the
\(A\)-hypergeometric system \(M_{A,\beta_{0}}\) on
\(V=\prod_{m=1}^{r}\mathrm{H}^{0}(X,L_{m})\) determined by ``period integrals''
\begin{equation*}
\int (f_{1}\cdots f_{r})^{-1} \frac{\mathrm{d}t_1}{t_1} \wedge \cdots \wedge \frac{\mathrm{d}t_n}{t_n}
\end{equation*}
where \(f_i\) is the function on \((\mathbb{C}^{\ast})^{n}\) coming from a section of \(L_i\).
Then for any \(x \in V\), we have
\begin{equation*}
  \mathrm{Sol}^{0}(M_{A,\beta_0}) = \mathrm{H}_{n}(X \setminus Y_{x}, (X\setminus Y_{x}) \cap D).
\end{equation*}
Here, \(D\) is the union of all the torus-invariant divisors; for \(x=(f_1,\ldots,f_r)\), \(Y_{x} = \cup_{m=1}^{r} \overline{(f_m=0)}\);
\(\mathrm{Sol}^{0}\) is the classical solution functor.
\end{theorem*}

As an application, we show the existence of a ``rank 1 point'' (which was called
a \emph{maximal degeneracy point}
in~\cite{1997-Hosono-Lian-Yau-maximal-degeneracy-points-of-gkz-systems}) for any
family of Calabi--Yau complete intersections in any Gorenstein Fano toric
variety, generalizing the theorem of
Hosono--Lian--Yau~\cite{1997-Hosono-Lian-Yau-maximal-degeneracy-points-of-gkz-systems}.
See Corollary~\ref{situation:corollaries}.

The assumption that \(\mathbf{0}\) is an interior point of \(\Delta\) can not be
removed from the hypotheses. This condition is also crucial for the existence of
rank one point, Corollary~\ref{situation:corollaries}.

Our main theorem,~Theorem~\ref{eq:main}, can be applied to a more general
situation than the one mentioned above.
See Example~\ref{example:simplified-moduli} for such an
application, where we deal with the \(A\)-hypergeometric system on the
``simplified moduli space''.
See Situation~\ref{situation:toric-notation} and
Hypothesis~\ref{situation:hypothesis} for the
conditions on the \(A\)-hypergeometric system in order to ensure the validity of
Theorem~\ref{eq:main}.

Even if one is only interested in Calabi--Yau spaces, our theorem is desirable,
because most toric varieties one meets in applications and computations are
singular. For example, the hypotheses of the theorem of Huang~et.~al.~are not
satisfied by the the projective toric variety defined by the convex hull of
\begin{equation*}
\{(\delta_{1,i},\ldots,\delta_{n,i}):i=1,\ldots,n\}\cup\{(-1,\ldots,-1)\} \subset \mathbb{Z}^{n}
\end{equation*}
(sometimes referred to as the ``mirror \(\mathbb{P}^{n}\)'').
Thus, one already needs the full strength of our theorem \eqref{eq:main} to
relate the solutions to the \(A\)-hypergeometric system and the cohomological
objects attached to the so-called ``mirror Calabi--Yau spaces'', the simplest
type of Calabi--Yau spaces.

The note is organized as follows. Section~\ref{sec:a-hyp} contains a more
in-depth introduction to our theorem and the proof of the application on the
existence of rank 1 point in some cases. Section~\ref{sec:semi-nonresonant}
proves our situation fits into a picture studied by
Reichelt~\cite{2014-Reichelt-laurent-polynomials-GKZ-hypergeometric-systems-and-mixed-hodge-modules},
so that the \(A\)-hypergeometric system we are interested in is a Fourier
transform of an extraordinary Gauss--Manin system. This is the key input for the
rest of the paper. After reviewing some basics
about \(\mathscr{D}\)-modules in Section~\ref{sec:d-mod}, we show how to use exponential
twists to compute cohomology of various spaces in Section~\ref{sec:cohomology}. Section~\ref{sec:transition}
proves a lemma relating Fourier transform of an extraordinary
Gauss--Manin system and an exponentially twisted \(\mathscr{D}\)-module. The
last section finishes the proof of the main theorem.

\subsection*{Acknowledgement}
We thank An~Huang, Bong~Lian, Shing-Tung~Yau and
Chenglong~Yu for introducing this question to us, and for the conversations they
had with us. We are grateful to Jie~Zhou, who explained to us his related work
on the Hesse pencil of elliptic curves and the historic background of chain integral
formulae.

T.-J.~Lee is grateful to Professors Lian and Yau for the invitation to Harvard
CMSA in April 2018, where this work was partially done.

D.~Zhang is grateful to Baohua Fu and Xuanyu Pan for their hospitality during
his visit to Academy of Mathematics and System Science in July 2018.

\section{\texorpdfstring{\(A\)}{A}-hypergeometric systems}
\label{sec:a-hyp}

We first introduce, in Situation~\ref{situation:toric-notation}, the necessary
notation for the definition of the \(A\)-hypergeometric system we are interested
in. Remark~\ref{remark:toric-cy-complete-intersections} will explain how to
deduce a collection of data satisfying Situation~\ref{situation:toric-notation}
from toric geometry. After explaining a technical hypothesis which has a natural
geometric meaning in~\ref{situation:hypothesis}, we state the main theorem of
this note.

\begin{situation}\label{situation:toric-notation}
  We recall the definition of \(A\)-hypergeometric systems for complete
  intersections. Fix an integer \(r>0\).
  \begin{enumerate}
	\item Let \(V_i = \mathbb{C}^{N_i}\) be complex vector spaces of dimension
    \(N_i\), \(i=1,\ldots,r\). Set \(N=N_1+\cdots+N_r\) and
    \(V=V_1\times\cdots\times V_r\).
	\item Let \(x_{i,1},\cdots, x_{i,N_i}\) be a fixed coordinate system on the
    \emph{dual} vector space \({V_i}^{\vee}\). Set
    \(\partial_{i,j}=\partial/\partial x_{i,j}\),~\(1\le j\le N_i\).
	\item Let
    \(\{(s_1,\ldots,s_r,t_1,\ldots,t_n): s_j,~t_i \in \mathbb{C}^{\ast}\}\) be an algebraic
    torus of dimension \(r+n\). We will write this torus as a product
    \((\mathbb{C}^{\ast})^{r}\times T\) with \(\dim T = n\)
	\item Let \(A\) be an \((r+n) \times N\) matrix with integral entries. We shall
    write \(A=[A_1,\ldots,A_r]\), where \(A_i\) is an \((r+n)\times N_i\) matrix. We
    write \(A_i\) into columns
    \begin{equation*}
      A_i=\begin{bmatrix}
        a_{i,1} & \cdots & a_{i,N_i}\\
        w_{i,1} & \cdots & w_{i,N_i}
      \end{bmatrix}
      =\begin{bmatrix}
        \mu_{i,1} & \cdots & \mu_{i,N_i}
      \end{bmatrix}
    \end{equation*}
    where \(a_{i,j}\in\mathbb{Z}^r\) and \(\mu_{i,j}\in\mathbb{Z}^{r+n}\).
    We also assume that \(\mathrm{rank}(A)=r+n\).
	\item Let \(\tau_i: (\mathbb{C}^{\ast})^r \times T \to V_i\) be the map defined by the submatrix \(A_i\)
    \begin{equation*}
      (s,t) \mapsto (s^{a_{i,1}}t^{w_{i,1}}, \ldots, s^{a_{i,N_i}}t^{w_{i,N_i}})
    \end{equation*}
    and \(\bar{\tau_i}\) be the composition \((\mathbb{C}^{\ast})^{r} \times T \to V_{i}\setminus \{0\}\to \mathbb{P}V_i\).
    Let \(\tau=(\tau_1,\ldots,\tau_r)\) and \(\bar{\tau}=(\bar{\tau}_1,\ldots,\bar{\tau}_r)\).
    We assume that \(\tau\) is \emph{injective}. (This is
    not a serious restriction, but is required by Reichelt's
    theorem~\ref{eq:reichelt-theorem}. We show how to get rid of it in
    Lemma~\ref{lemma:injective-gkz}).
	\item We assume \(a_{i,j}=(\delta_{i,1},\ldots,\delta_{i,r})^t\) for \(j=1,\ldots,N_i\).
    Here \(\delta_{i,k}\) is the Kronecker delta.
	\item Let \(X'\) be the closure of the image of \(\bar{\tau}\).
    \(X'\) is a toric variety with maximal torus \(T'=\bar{\tau}(T)\).
    Let \(\mathcal{L}_{i}^{\vee}\) be the pullback of
    the very ample line bundle on \(\mathbb{P}V_{i}\) to \(X'\).
    Then for each \(x=(x_1,\ldots,x_r)\in V^{\vee}=\prod_{i=1}^{r} V_{i}^{\vee}\),
  	we denoted by \(Y'_{x_i}\) the subvariety in \(X'\) defined by \(x_{i}=0\)
  	(cf. Remark~\ref{remark:toric-cy-complete-intersections}).
  	We also set \(Y'_x:=\cup_{i=1}^{r} Y'_{x_i}\) and \(U'_{x}:=X'-Y'_{x}\).
	\item Let \(X\to X'\) be a toric resolution of singularities.
    Denote by \(U_{x}\) and \(Y_{x}\) the preimage of \(U_{x}'\) and \(Y_{x}'\) inside \(X\) respectively.
  \end{enumerate}
\end{situation}

  Given \(\beta \in \mathbb{C}^{r+n}\), the \(A\)-hypergeometric ideal \(I_{A,\beta}\)
  is the left ideal of the Weyl algebra \(\mathscr{D}=\mathbb{C}[x,\partial]\) on
  the \emph{dual} vector space \(V^{\vee}\) generated by the following two types of
  operators

  \begin{itemize}
    \itemsep=3pt
  \item The ``box operators'': \(\partial^{\nu_+} - \partial^{\nu_{-}}\),
    where \(\nu_{\pm}\in \mathbb{Z}_{\geq 0}^{N}\) satisfy \(A\nu_+=A\nu_{-}\).
    Here for \(m \in \mathbb{Z}_{\geq 0}^{N}\) we write
    \(\partial^m = \partial_{1,1}^{m_{1,1}} \cdots \partial_{r,N_r}^{m_{r,N_r}}\).
  \item The ``Euler operators'': \(E_{l} - \beta_l\), where
    \(E_l=\sum_{i,j}\langle \mu_{i,j},\mathrm{e}_l\rangle x_{i,j}\partial_{i,j}\).
    Here \(\mathrm{e}_l=(\delta_{l,1},\ldots,\delta_{l,n+r})\in\mathbb{Z}^{r+n}\).
  \end{itemize}
  We shall only be interested in a special \(\beta\), namely
  \begin{equation*}
    \beta_0 = (-1,\ldots,-1,0,\ldots,0) \in \mathbb{C}^{r} \times \mathbb{C}^{n}.
  \end{equation*}
  The \(A\)-hypergeometric system \(M_{A,\beta}\) is the cyclic \(\mathscr{D}\)-module
  \(\mathscr{D}/I_{A,\beta}\). As shown by
  Gelfand~et.~al.~\cite{1989-Gelfand-Kapranov-Zelebinski-hypergeometric-functions-and-toral-manifolds},
  (under our hypothesis on \(a_{i,j}\)) and
  Adolphson~\cite{1994-Adolphson-hypergemetric-functions-and-rings-generated-by-monomials}
  (in general), \(M_{A,\beta}\) is a holonomic \(\mathscr{D}\)-module.

\begin{remark}
  \label{remark:toric-cy-complete-intersections}
  The \(\mathscr{D}\)-module \(M_{A,\beta_0}\) naturally arises from the study of
  period integrals on toric varieties. Let \((X,L)\)
  be an \(n\)-dimensional polarized toric variety defined by a full-dimensional polytope
  \(\Delta\subset \mathbb{Z}^{n}\). We assume that \(\mathbf{0}\) is an interior
  point of \(\Delta\). In particular, the section of \(L\) corresponding to
  \(\mathbf{0}\) is supported on the union of all the torus-invariant divisors.

  Suppose further that there exists a collection of base-point-free invertible
  sheaves \(L_i\) on \(X\) such that \(L=L_1 \otimes \cdots \otimes L_{r}\).
  Then the section polytope \(\Delta_i\) of \(L_i\) satisfies the condition that
  \(\mathbf{0}\in \Delta_i\) (since \(L_i\) contains some torus-invariant
  divisor as its section), and \(\Delta=\Delta_1+\cdots+\Delta_r\).

  By assumption, each \(L_i\) determines a morphism
  \(X\to\mathbb{P}(V_i)\) with \(V_i=\mathrm{H}^0(X,L_i)^{\vee}\).
  Put \(V=V_1\times\cdots\times V_r\).
  A generic element \(x=(x_1,\ldots,x_r)\in V^{\vee}\) determines a smooth
  complete intersection \(\cap_{i=1}^{r} Y_{x_i}\)
  in \(X\). We can write
  \begin{equation*}
   \sigma_i := x_{i}|_{(\mathbb{C}^{\ast})^{n}}=\sum_{w_{i,j}\in\Delta_i\cap\mathbb{Z}^n} x_{i,j} t^{w_{i,j}}
  \end{equation*}
  (recall that \((t_1,\ldots,t_n)\) is the coordinate of the embedded torus).
  These data, together with \(a_{i,j}=(\delta_{i,1},\ldots,\delta_{i,r})^t\),
  \(1\le i\le r\) and \(1\le j\le N_i:=\dim V_i\), gives a matrix \(A\) which
  meets the requirements in
  Situation~\ref{situation:toric-notation}.

  The \(A\)-hypergeometric system \(M_{A,\beta_0}\) associated with the matrix
  \(A\) just constructed then contains ``period integrals'' as its solutions.
  On the open embedded torus \((\mathbb{C}^{\ast})^{n}\), we can consider
  \begin{equation}\label{equation:complete:intersection:toric:cy2}
    \int_{\tilde{C}} \frac{1}{\sigma_1\cdots\sigma_r}
    \frac{dt_1}{t_1}\wedge\cdots\wedge\frac{dt_n}{t_n}
  \end{equation}
  for some homology class
  \(\tilde{C}\in\mathrm{H}_n((\mathbb{C}^{\ast})^{n}\cap(X\setminus\cup_{i=1}^{r}Y_{x_{i}}),\mathbb{Z})\).
  One can check directly that \eqref{equation:complete:intersection:toric:cy2}
  is a solution to \(M_{A,\beta_0}\) where
  \begin{equation*}
    \beta_0=(-1,\ldots,-1,0,\ldots,0)\in\mathbb{C}^r\times\mathbb{C}^n.
  \end{equation*}
  This integral can be related to the homology of the affine part of the
  complete intersection \(Y_{x_1} \cap \cdots \cap Y_{x_r}\) via the well-known
  residue method.
\end{remark}

\begin{situation}%
  \label{situation:hypothesis}
  \textbf{Hypothesis.}
  Throughout this note, we assume that the vector
  \(-\beta_{0}\) lies in the \emph{interior} of the cone generated by
  column vectors of \(A\).
  This hypothesis is needed to ensure that we can apply a theorem of
  Reichelt~\cite{2014-Reichelt-laurent-polynomials-GKZ-hypergeometric-systems-and-mixed-hodge-modules}.
  See Lemma~\ref{lemma:semi-nonresonant}.
\end{situation}

This hypothesis is not as technical as it may look. For instance, when \(A\) is
from ``toric geometry'' as in
Remark~\ref{remark:toric-cy-complete-intersections}, this hypothesis is
always verified as long as \(\mathbf{0}\) is an interior point of \(\Delta\).
This is a consequence of the following lemma.

\begin{lemma}
  Given a lattice polytope \(\Delta\),
  not necessarily reflexive, with \(\mathbf{0}\in\mathrm{int}(\Delta)\) and
  a decomposition \(\Delta=\Delta_{1}+\cdots+\Delta_{r}\) with \(\Delta_{i}\)
  being integral lattice polytopes, let
  \(A\) be the integral matrix defined in
  Remark~\ref{remark:toric-cy-complete-intersections} from these data.
  Assume further that \(\mathbf{0}\in\Delta_{i}\) for all \(i\),
  then \(-\beta_{0}\) lies in the interior of the cone generated
  by column vectors of \(A\).
\end{lemma}

\begin{proof}
  We regard each column of \(A\) as a vector in \(\mathbb{R}^{r+n}\)
  and let \((x_{1},\ldots,x_{n+r})\) be the coordinates.
  Let \(\mathbb{R}_{\ge 0}A\) be the cone generated by the columns of \(A\).
  Looking at the intersection of \(\mathbb{R}_{\ge 0}A\) with the
  affine hyperplane \(H\) defined by \(x_{1}+\cdots+x_{r}=1\).
  Note that all the column vectors of \(A\)
  are sitting in this hyperplane.
  Then \(-\beta_{0}\) lies in the interior of \(\mathbb{R}_{\ge 0}A\)
  if and only if \(-\beta_{0}\slash r\) lies in
  the interior of the convex hull determined by the column vectors of \(A\).

  We want to prove that there exists an \(\epsilon>0\) such that
  \begin{equation*}
    B_{\epsilon}(-\beta_{0}/r)\cap H\subset \mathbb{R}_{\ge 0}A\cap H
  \end{equation*}
  (\(B_{\epsilon}(\rho)\) is the open ball of radius \(\epsilon\) centered at \(\rho\)).
  First note that any \(y\in \Delta\), there exist
  \(0\le t_{i}\le 1\) such that \(y=\sum_{i} t_{i}m_{i}\), where
  \(m_{i}\) are vertexes of \(\Delta\), since a polytope is
  the convex hull of its vertexes.
  Since \(\mathbf{0}\in\mathrm{int}(\Delta)\), by rescaling \(\Delta\), for
  any \(R>0\) we may find a
  \(\delta>0\) such that for each \(y\in B_{\delta}(\mathbf{0})\) we have
  \(y=\sum_{i} t_{i}m_{i}\) with \(0\le t_{i}<1/R\).

  Now we can choose \(\epsilon\) small enough such that
  \begin{equation*}
  z=(z_{1},\ldots,z_{r+n})\in B_{\epsilon}(-\beta_{0}/r)\cap H\Rightarrow
  (z_{r+1},\ldots,z_{r+n})\in B_{\delta}(\mathbf{0}).
  \end{equation*}
  We can find \(0\le t_{i}\le 1/R\) such that \((z_{r+1},\ldots,z_{r+n})=\sum t_{i}m_{i}\).
  Let \(a_{i}\) be the colume vector of \(A\) corresponding to \(m_{i}\).
  Then
  \begin{equation*}
  \sum t_{i} a_{i} = (q_{1},\ldots,q_{r},z_{r+1},\ldots,z_{r+n}),
  \end{equation*}
  where \(q_{i}=\sum_{j:m_{j}\in\Delta_{i}} t_{j}\). We can take \(R\) large
  such that \(q_{i}<1/r\) for all \(i=1,\ldots,r\).
  Let \(b_{i}\) be the column vector corresponding to \(\mathbf{0}\in\Delta_{i}\). We have
  \begin{equation*}
    z = \sum t_{i}a_{i} + \sum_{i=1}^{r} (1/r-q_{i})b_{i} \in\mathbb{R}_{\ge 0}A.
  \end{equation*}
  This completes the proof.
\end{proof}

Now we can state our main theorem.

\begin{theorem}
  \label{eq:main}
  Adapt the notation introduced in~\ref{situation:toric-notation}.
  Under Hypothesis~\ref{situation:hypothesis} and
  the assumptions on \(A\) made in~\ref{situation:toric-notation}, if
  \begin{equation*}
  \beta_0=(-1,\ldots,-1,0,\ldots,0)\in\mathbb{C}^r\times\mathbb{C}^n,
  \end{equation*}
  then for \emph{any} toric resolution \(X \to X'\) (in fact, we only need to
  assume that \(X\) to be a smooth algebraic variety containing \(T\) as an open
  dense subset, and the morphism \(X \to X'\) restricts to the identity on
  \(T\)) and any \(x\in V^{\vee}\), we have
	\begin{equation*}
  \mathrm{Sol}^0(M_{A,\beta_0},\widehat{\mathcal{O}}_{V^{\vee},x}) = \mathrm{H}_{n}(U_{x}, U_{x} \cap D)
  = \mathrm{H}_{n}(U'_{x},U'_{x} \cap D').
	\end{equation*}
\end{theorem}

Here
\(\mathrm{Sol}^{0}(-)=R^{0}\mathcal{H}om_{\mathscr{D}_{V^{\vee}}}(-,\widehat{\mathcal{O}}_{V^{\vee},x})\)
is the underived solution functor of \(\mathscr{D}\)-modules which outputs the
set of formal power series solutions of a \(\mathscr{D}\)-module around a point
\(x\).

When \(X'\) itself is smooth, \(r=1\) and
\(Y_{x}\) are Calabi--Yau, (\ref{eq:main}) was proved by
Huang~et.~al.~\cite{2016-Huang-Lian-Yau-Zhu-chain-integral-solutions-to-tautological-systems} using the general theory
of~\cite{2016-Huang-Lian-Zhu-period-integrals-and-the-riemann-hilbert-correspondence}. The
said theory, as we understood,
requires the smoothness hypothesis in a crucial way.

An~Huang has informed us, in a private communication, that he can
prove~\eqref{eq:main} for mirror quintics. Related to our result is Jie~Zhou's
work~\cite{2017-Zhou-gkz-hypergeometric-series-for-the-hesse-pencil-chain-integrals-and-orbifold-singularities}.
He gives an explicit description of the
solutions to the \(A\)-hypergeometric system associated with the matrix
\begin{equation*}
A =
\begin{bmatrix}
  1 & 1  & 1  & 1  \\
  0 & 2  & -1 & -1 \\
  0 & -1 &  2 & -1
\end{bmatrix},
\end{equation*}
using relative homology classes on the Hesse pencil of elliptic
curves.

\subsection*{An application}
Let \(\Delta_i\), \(1\le i\le r\), be polytopes such that
\(\mathbf{0}\in\Delta_i\) for all \(i\) and
\(\Delta=\Delta_1+\cdots+\Delta_r\). Assume that \(\mathbf{0}\) is an interior
point of \(\Delta\).
The integral points in \(\Delta_i\) define an integral matrix \(A\) as in
Remark~\ref{remark:toric-cy-complete-intersections}.
We retain the notation in Situation~\ref{situation:toric-notation}
and let \(\beta_0=(-1,\ldots,-1,0,\ldots,0)\in\mathbb{C}^r\times\mathbb{C}^n\).

\begin{corollary}[Existence of rank 1 points]
  \label{situation:corollaries}
  Let notation be as above. There exists a point \(x\in V^{\vee}\) such that
  \(\mathrm{Sol}^0(M_{A,\beta_0},\widehat{\mathcal{O}}_{V^{\vee},x})\) is of
  rank one.
\end{corollary}

\begin{proof}
  We choose \(x\) to be the section corresponding to the lattice points
  \((0,\ldots,0)\), which exists since \(\mathbf{0}\in\Delta_{i}\).
  The hypersurface \(Y_{x}\) is just the union of all toric divisors and
  \(U_{x}=T\). The assertion follows since \(\mathrm{H}_n(T)\) is of rank one.
\end{proof}

The \(r=1\) case was essentially proved by
Hosono--Lian--Yau~\cite{1997-Hosono-Lian-Yau-maximal-degeneracy-points-of-gkz-systems}
using another approach. The idea that one can use a certain cohomological
interpretation to prove the existence of rank 1 point is due to
Huang--Lian--Zhu~\cite{2016-Huang-Lian-Zhu-period-integrals-and-the-riemann-hilbert-correspondence}.

\subsection*{Remark on the injectivity of \texorpdfstring{\(\tau\)}{}}
In Situation~\ref{situation:toric-notation} we assumed the map \(\tau\) is
injective. When \(\beta=\beta_0\), we demonstrate
below how to achieve this by modifying \(A\) without changing the ambient
\(\mathscr{D}\)-module  \(M_{A,\beta_0}\)

Let \(A\) be an \((r+n)\times N\) integral matrix. Assume that \(A\) satisfies
all the hypotheses in Situation~\ref{situation:toric-notation}, except the item
(5), i.e., we do not assume that the toric mapping \(\tau\) is 
injective. We impose the following condition on \(A\):
\begin{equation}
  \label{eq:condition-star}
  \tag{\(\ast\)}
  \text{For each \(i\),
  there exists \(1\le j\le N_i\) such that \(w_{i,j}=(0,\ldots,0)^t\).}
\end{equation}
Let \(C\) be the \(n\times N\) integral matrix obtained
by removing the first \(r\) rows from \(A\).
Regarding \(C\) as a map \(\mathbb{C}^{N}\to\mathbb{C}^{n}\),
\(\mathrm{im}(C)\) is subgroup in \(\mathbb{Z}^{n}\) of finite index.
Let \(\{\mathrm{f}_{1},\ldots,\mathrm{f}_{n}\}\) be a \(\mathbb{Z}\)-basis of \(\mathrm{im}(C)\)
and \(\{\mathrm{e}_{1},\ldots,\mathrm{e}_{n}\}\) be the standard \(\mathbb{Z}\)-basis of \(\mathbb{Z}^{n}\).
They are related by
\begin{equation*}
\mathrm{f}_{i} = \sum_{j=1}^{n} b_{ji} \mathrm{e}_{j},~i=1,\ldots,n.
\end{equation*}
\(B=(b_{ji})\) is clearly an integral matrix and \(B\in\mathrm{GL}_{n}(\mathbb{Q})\).
Note that \(B^{-1}\) may not be integral. However, \(B^{-1}C\) is an integral matrix.
To see this, if \(c_{k}\) is the \(k\)\textsuperscript{th} column of \(C\), we write
\begin{equation*}
c_{k} = \sum_{l=1}^{n} d_{lk} \mathrm{f}_{l}.
\end{equation*}
\(d_{lk}\in\mathbb{Z}\) since \(\{\mathrm{f}_{1},\ldots,\mathrm{f}_{n}\}\) is a \(\mathbb{Z}\)-basis of
\(\mathrm{im}(C)\). Put \(D=(d_{lk})\). From the relation \(C=BD\), we deduce that \(B^{-1}C=D\) is integral and the
columns of \(D\) generate \(\mathbb{Z}^{n}\).
Now the condition \eqref{eq:condition-star} ensures that the columns of the matrix
\begin{equation*}
\begin{bmatrix}
      I_r & 0 \\
      0 & B^{-1}\\
    \end{bmatrix}\cdot A
\end{equation*}
generate \(\mathbb{Z}^{r+n}\). We thus deduce that

\begin{lemma}
  \label{lemma:sublattice}
  Let \(A\) be as above. Assume further that \(A\) 
  has property~\eqref{eq:condition-star}.
  Let \(\tau\) be the morphism defined by \(A\).
  We denote by \(\left.\tau\right|_{T}\) the pullback of \(\tau\)
  via \(T\to (\mathbb{C}^{\ast})^{r}\times T\), \(t\mapsto (\mathbf{1},t)\).
  Let \(B\in\mathrm{GL}_n(\mathbb{Q})\) such that
  the columns of \(B\) form
  an integral basis for the image torus \(T':=\mathrm{im}(\left.\tau\right|_T)\).
  Define the \((r+n)\times(r+n)\) matrix \(R\)
  \begin{equation*}
    R =
    \begin{bmatrix}
      I_r & 0 \\
      0 & B^{-1}\\
    \end{bmatrix},
  \end{equation*}
  Then the columns of \(RA\) generate \(\mathbb{Z}^{r+n}\) as a \(\mathbb{Z}\)-module.
\end{lemma}

\begin{lemma}%
  \label{lemma:injective-gkz}
  Let \(A\) and \(R\) be as in Lemma~\ref{lemma:sublattice}. We have
  \(M_{A,\beta_0}=M_{RA,\beta_0}\).
\end{lemma}

\begin{proof}
For a pair of non-negative integral
vectors \(\nu_{\pm}\), \(A\nu_{+}=A\nu_{-}\) if and only if
\(RA\nu_{+}=RA\nu_{-}\). Thus we get the same collection of box operators.
Since the first \(r\) rows of \(A\) and \(RA\) are the same, the Euler operators
induced by the \((\mathbb{C}^{\ast})^{r}\)-action remain unchanged. The Euler
operators associated with \(T\) action and \(T'\) action differed by \(B\).
Since the character set \(\beta_0\) has last \(n\) entries equal to zero,
The Euler equations associated with \(T\) and \(T'\) are also the same.
\end{proof}

In \cite{1993-Aspinwall-Greene-Morrison-the-monomial-divisor-mirror-map},
Aspinwall, Greene, and Morrison introduced the ``simplified moduli space'' to approximate
the (polynomial) moduli space of Calabi--Yau hypersurfaces in toric varieties.
Given a lattice reflexive polytope \(\Delta\subset M_{\mathbb{R}}\simeq\mathbb{R}^{n}\),
the normal fan of \(\Delta\) determines an \(n\)-dimensional toric variety
with maximal torus \(T=(\mathbb{C}^{\ast})^{n}\).
Let \((\Delta\cap M)_{0}\) be
the set of integral points which do not lie in the interior of any
codimension one face of \(\Delta\).
We denote by \(\mathbb{C}^{\#(\Delta\cap M)_{0}}\) the vector space of
Laurent polynomials of the form
\begin{equation*}
\sum_{m\in (\Delta\cap M)_{0}} a_{m} t^{m},~a_{m}\in\mathbb{C}.
\end{equation*}
The \(T\)-action on \((\Delta\cap M)_{0}\) induces a \(T\)-action on
\(\mathbb{C}^{\#(\Delta\cap M)_{0}}\). Let \(\mathbb{C}^{\ast}\) act
on \(\mathbb{C}^{\#(\Delta\cap M)_{0}}\) by an overall scaling.
The ``simplified moduli space'' is defined by the GIT quotient
\(\mathbb{C}^{\#(\Delta\cap M)_{0}}\sslash (\mathbb{C}^{\ast}\times T)\).

Let \(A_{0}\) be the integral matrix whose columns
consist of the vectors of the form \((1,m)^{t}\) with \(m\in (\Delta\cap M)_{0}\).
The ``period integrals'' on the space
\(\mathbb{C}^{\#(\Delta\cap M)_{0}}\) as well as on its GIT quotient
\(\mathbb{C}^{\#(\Delta\cap M)_{0}}\sslash (\mathbb{C}^{\ast}\times T)\)
can be studied by the GKZ system
\(M_{A_{0},\beta_{0}}\) with \(\beta_{0}=(-1,0,\ldots,0)\) as usual.
The matrix \(A_{0}\) satisfies the condition~\eqref{eq:condition-star}
(with \(r=1\)).
We can apply our method in Lemma~\ref{lemma:sublattice} to
solve the GKZ system \(M_{A_{0},\beta_{0}}\). Here is an example.

\begin{instance}
  \label{example:simplified-moduli}
  Let \(\Delta\) be the convex hull of \((-1,-1)\), \((2,-1)\) and \((-1,2)\)
  in \(\mathbb{R}^{2}\). \(\Delta\) can be identified with the section polytope
  of \(-K_{\mathbb{P}^{2}}\). \((\Delta\cap M)_{0}\) consists of four integral points:
  \((-1,-1)\), \((2,-1)\), \((-1,2)\) and \((0,0)\). The vector space of Laurent polynomials
  \(\mathbb{C}^{\#(\Delta\cap M)_{0}}\) is the Dwork family in \(\mathbb{P}^2\):
  \begin{equation*}
    a_1z_1^3+a_2z_2^3+a_3z_3^3+a_4z_1z_2z_3=0,~[z_1:z_2:z_3]\in\mathbb{P}^2.
  \end{equation*}
  Let \(\beta_{0}=(-1,0,0,0)\) as before.
  The corresponding \(A_{0}\) matrix in the GKZ system is
  	\begin{equation*}
	A_{0} =
	\begin{bmatrix}
  	1 & 1  & 1  & 1  \\
  	0 & 2  & -1 & -1 \\
  	0 & -1 &  2 & -1
	\end{bmatrix}.
	\end{equation*}
	The columns of \(A_{0}\) do not generate \(\mathbb{Z}^3\).
	However, we can perform row operations on the last two rows of \(A_{0}\) to get
	\begin{equation*}
	A_{0}' =
	\begin{bmatrix}
  	1 & 1  & 1  & 1  \\
  	0 & 0  & 1 & -1 \\
  	0 & -1 &  1 & 0
	\end{bmatrix}.
	\end{equation*}
	The matrices \(B\) and \(B^{-1}\) in this case are
	\begin{equation*}
  B =
  \begin{bmatrix}
  1 & -2\\
  1 & 1
  \end{bmatrix},~
	B^{-1} =
	\begin{bmatrix}
	1/3 & 2/3\\
	-1/3 & 1/3
	\end{bmatrix}.
	\end{equation*}
	and we have
	\begin{equation*}
	\begin{bmatrix}
	1 & 0\\
	0 & B^{-1}
	\end{bmatrix}
	\begin{bmatrix}
  	1 & 1  & 1  & 1  \\
  	0 & 2  & -1 & -1 \\
  	0 & -1 &  2 & -1
	\end{bmatrix}
	=
	\begin{bmatrix}
  	1 & 1  & 1  & 1  \\
  	0 & 0  & 1 & -1 \\
  	0 & -1 &  1 & 0
	\end{bmatrix}
	\end{equation*}
	From the discussion above,
	\begin{equation*}
		\mathrm{Sol}^0(M_{A_{0},\beta_{0}},\widehat{\mathcal{O}}_{V^{\vee},x})
		=\mathrm{Sol}^0(M_{A_{0}',\beta_{0}},\widehat{\mathcal{O}}_{V^{\vee},x})=
		\mathrm{H}_{n}(U_{x}, U_{x} \cap D)
	\end{equation*}
	with \(X\) being a resolution of ``mirror \(\mathbb{P}^2\)'' and \(U_{x}=X\setminus Y_{x}\).
\end{instance}

\subsection*{Independence of relative homology.}
  \label{situation:sanity}
As we have been asked several
times, we should explain the (trivial) fact that the relative homology groups
displayed above are independent of the choice of the resolution, i.e., the
second equality in~\eqref{eq:main}.
We choose to use the language of sheaves to show this.

Let \(W\) be a (locally quasi-compact, Hausdorff)
topological space (in our case \(W = U_{x} \cap T\)). Assume that there is a commutative diagram
\begin{equation*}
  \begin{tikzcd}
    W \ar[r,"j_1"] \ar[d,equal] & Z_1 \ar[d,"{\varphi}"] \\
    W \ar[r,swap,"{j_2}"] & Z_2
  \end{tikzcd}
\end{equation*}
of (locally quasi-compact, Hausdorff) topological spaces, in which \(j_1\) and
\(j_2\) are open embeddings and \(\varphi\) is proper. Then we know
\begin{equation*}
  \mathrm{H}_{m}(Z_i, Z_i - W) = R^{-m}\Gamma_c(Z_i,Rj_{i\ast}\omega_{W}),
  \quad i = 1, 2,
\end{equation*}
where \(\omega_W\) is the dualizing complex of \(W\) (this can be also served as
the \emph{definition} of the relative homology groups). But then
\begin{align*}
  R\Gamma_c(Z_2, Rj_{2\ast}\omega_W) &= R\Gamma_c(Z_2, R\varphi_{\ast}Rj_{1\ast}\omega_W) & \text{ by the commutativity} \\
  &= R\Gamma_c(Z_2, R\varphi_{!}Rj_{1\ast}\omega_W) & \text{by the properness of }\varphi \\
  &= R\Gamma_c(Z_1, Rj_{1\ast}\omega_W).
\end{align*}
This proves that the relative homology groups are independent of the choice of
the \(Z_i\)'s. Taking \(Z_i\) to be \(U_{x}\) and \(U'_{x}\) respectively proves the
independence of relative homology groups in~\eqref{eq:main}.

\section{\texorpdfstring{\(\beta_0\)}{beta-0} is semi-nonresonant}
\label{sec:semi-nonresonant}
The proof of~(\ref{eq:main}) contains an input: a comparison theorem between an
extraordinary Gauss--Manin system and the \(A\)-hypergeometric system, proved by
Reichelt~\cite{2014-Reichelt-laurent-polynomials-GKZ-hypergeometric-systems-and-mixed-hodge-modules}
(see also the results of Walther and
Schulze~\cite{2009-Schulze-Walther-hypergeometric-d-modules-and-twisted-gauss-manin-systems}).
In this section we verify that our parameter \(\beta_0\) is semi-nonresonant,
thus we can apply Reichelt's theorem.

\begin{definition}%
  \label{definition:semi-nonresonant}
  Let notation be as in Situation~\ref{situation:toric-notation}.
  We say \(\beta\) is \emph{semi-nonresonant} if
  \begin{equation*}
    \beta\notin \bigcup_{F} (\mathbb{Z}^{r+n}\cap \mathbb{Q}_{\ge 0}A) + \mathbb{C}F,
  \end{equation*}
  where the union is taken over all the faces \(F\) of \(A\). Recall that a face
  \(F\) of \(A\) is a subset of columns of \(A\) that minimizing some nonzero linear
  functional on the cone generated by \(A\). \(\mathbb{Q}_{\ge 0} A\) denotes
  the \(\mathbb{Q}_{\ge0}\)-span of the columns of \(A\) and \(\mathbb{C}F\) the
  \(\mathbb{C}\)-span of \(F\).
\end{definition}

\begin{theorem}[Reichelt]
  \label{eq:reichelt-theorem}
  In Situation~\ref{situation:toric-notation}, if \(\beta=(\beta_{i})\in\mathbb{C}^{r+n}\)
  is semi-nonresonant,
  then we have
  \begin{equation*}
    \mathrm{FT}(\tau_{!}\mathcal{O}_{(\mathbb{C}^{\ast})^{r}\times T}^{\beta})
    = M_{A,\beta}
  \end{equation*}
\end{theorem}
Here \(\mathrm{FT}\) stands for the Fourier--Laplace transform of
\(\mathscr{D}\)-modules and \(\mathcal{O}_{(\mathbb{C}^{\ast})^{r}\times T}^{\beta}\)
is the cyclic \(\mathscr{D}\)-module
\begin{equation*}
\mathscr{D}_{(\mathbb{C}^{\ast})^{r}\times T}\slash
\mathscr{D}_{(\mathbb{C}^{\ast})^{r}\times T}
\langle s_{i}\partial_{s_{i}}-\beta_{i},t_{j}\partial_{t_{j}}-\beta_{j+r}\colon
1\le i\le r,~1\le j\le n \rangle.
\end{equation*}
For the proof,
see~\cite{2014-Reichelt-laurent-polynomials-GKZ-hypergeometric-systems-and-mixed-hodge-modules}*{Proposition~1.14}.

The following lemma enables us to apply Reichelt's theorem in our setting.

\begin{lemma}%
  \label{lemma:semi-nonresonant}
  In situation~\ref{situation:toric-notation}, under
  Hypothesis~\ref{situation:hypothesis},
  the parameter \(\beta_0\) is semi-nonresonant.
\end{lemma}

\begin{proof}
  Suppose on the contrary that \(\beta_{0}\in \bigcup_{F} (\mathbb{Z}^{r+n}\cap \mathbb{Q}_{\ge 0}A) + \mathbb{C}F\).
  We can find \(m\in (\mathbb{Z}^{r+n}\cap \mathbb{Q}_{\ge 0}A)\), \(c\in\mathbb{C}\) and \(f\in F\)
  such that \(\beta_{0}=m+c\cdot f\). Then we have
  \begin{equation*}
    m + (-\beta_{0}) = m-\beta_{0} = - c\cdot f\in \mathbb{C}F.
  \end{equation*}
  By our Hypothesis~\ref{situation:hypothesis}, \(m+(-\beta_{0})\) is contained
  in the \emph{interior} of the cone generated by column vectors of \(A\), but
  \(-c \cdot f\) is in the \emph{boundary}. This is a contradiction: we pick
  a nonzero linear functional \(h:\mathbb{R}^{r+n}\to \mathbb{R}\) defining
  \(F\), we then deduce from the minimizing property that
  \(h(-c\cdot f) < h(m+(-\beta_{0}))\).
\end{proof}

\section{Functors on \texorpdfstring{\(\mathscr{D}\)}{D}-modules}
\label{sec:d-mod}
To fix notation used throughout this note, we recall some
  notions in algebraic \(\mathscr{D}\)-modules.

  Let \(X\) be a \emph{smooth} variety and \(\mathscr{D}_X\) be the sheaf of
  differential operators on \(X\). By a \(\mathscr{D}_X\)-module on \(X\) we
  always mean a \emph{left} \(\mathscr{D}_X\)-module. Let
  \(D^{b}_{h}(\mathscr{D}_X)\) be the bounded derived categories of
  \(\mathscr{D}\)-modules over \(X\) with \emph{holonomic} cohomology sheaves.
  Let \(D^{b}_{rh}(\mathscr{D}_X)\) be the derived category of complex of
  \(\mathscr{D}_X\)-modules with regular holonomic cohomology sheaves. One can
  define the \emph{duality functor}, denoted by \(\mathcal{M}\mapsto\mathbb{D}\mathcal{M}\), on
  \(D_{h}^b(\mathscr{D}_X)\). Let \(f:X\to Y\) be a morphism between smooth
  varieties. One can define the following functors
\begin{itemize}
\item For a complex \(\mathcal{M}\in D_{h}^b(\mathscr{D}_X)\), let
  \(f_+(\mathcal{M}):=Rf_\ast(\mathrm{dR}_{X/Y}(\mathcal{M}))\), where
  \(\mathrm{dR}_{X/Y}\) is the relative de Rham functor.
\item For a complex \(\mathcal{N}\in D_{h}^b(\mathscr{D}_Y)\), let
  \(f^!\mathcal{N}:=f^\ast\mathcal{N}[\dim X-\dim Y]\), where \(f^\ast\) is the
  derived pullback on the category of quasi-coherent \(\mathcal{O}_Y\)-modules.
\end{itemize}
Note that these functors can be defined on the category of
\(\mathscr{D}\)-modules without the holonomic condition. Nonetheless, all the
functors \(\mathbb{D}\), \(f_+\) and \(f^!\) preserve the holonomicity. We put
\begin{itemize}
	\item \(f^+:=\mathbb{D}_X f^!\mathbb{D}_Y\), and
	\item \(f_!:=\mathbb{D}_Y f_+\mathbb{D}_X\).
\end{itemize}
\(f^+\) is the left adjoint of \(f_+\) and \(f_!\) is the left adjoint of \(f^!\).

When \(f\) is a smooth morphism, or more generally non-characteristic with respect
to a holonomic \(\mathscr{D}\)-module \(\mathcal{M}\), we have
\(f^{\ast}\mathcal{M}=f^{!}\mathcal{M}[\dim Y-\dim X]=f^{+}\mathcal{M}[\dim
X-\dim Y]\). Finally, given a cartisian diagram
\begin{equation*}
  \begin{tikzcd}
    X' \ar[r,"g'"] \ar[d,"f'"] & X \ar[d,"f"] \\
    Y' \ar[r,"g"] & Y,
  \end{tikzcd}
\end{equation*}
with all varieties are smooth, then we have
\begin{equation*}
  g^{!} f_+ = f'_+{ g' }^!.
\end{equation*}

Let \(S\subset X\) be a (possibly singular) subscheme of \(X\) and \(\mathscr{I}_S\)
be the corresponding ideal sheaf. For a \(\mathcal{O}_X\)-module \(\mathcal{F}\) on
\(X\), we define
\begin{equation*}
  \Gamma_{[S]}(\mathcal{F}):= \varinjlim_k \mathcal{H}om_{\mathcal{O}_X}(\mathcal{O}_X/\mathscr{I}^k_S,\mathcal{F}).
\end{equation*}
The quasi-coherent \(\mathcal{O}_{X}\)-module \(\Gamma_{[S]}(\mathcal{F})\)
inherits a \(\mathscr{D}_X\)-module structure and we can consider its right
derived functor \(R\Gamma_{[S]}\). When \(\mathcal{M}\) is a complex with holonomic
cohomology sheaves, so is \(R\Gamma_{[S]}(\mathcal{M})\). Let \(j:X\setminus S\to
X\) be the open embedding. For \(\mathcal{M}\in D^b_{h}(\mathscr{D}_X)\) we have
the distinguished triangle
\begin{equation}\label{distinguished:triangle:singular:support}
  R\Gamma_{[S]}(\mathcal{M})\to \mathcal{M} \to j_+j^!\mathcal{M}\to R\Gamma_{[S]}(\mathcal{M})[1].
\end{equation}
Let \(i:S \to X\) be the closed embedding. In case \(S\) is \emph{smooth},
we have \(R\Gamma_{[S]}(\mathcal{M})\simeq i_+i^!\mathcal{M}\) and the
distinguished triangle \eqref{distinguished:triangle:singular:support} becomes
\begin{equation}
  i_+i^!\mathcal{M}\to \mathcal{M} \to j_+j^!\mathcal{M}\to.
\end{equation}
Therefore we shall sometimes abuse notation and use \(i_{+}i^!\) instead of
\(R\Gamma_{[S]}\) even when \(S\) is singular.

The proofs of the above said results can be found in
\cite{2003-Baldassarri-DAgnolo-on-dwork-cohomology-and-algebraic-d-modules}.

\section{Cohomology computation}
\label{sec:cohomology}
In this section, we explain how to use exponential twists to
compute cohomology groups. There are three topics:
\begin{itemize}
\item how to use an exponential twisted de~Rham cohomology (Dwork cohomology) to compute the
  cohomology of a complete intersection,
\item how to use an exponential twisted de~Rham cohomology to compute cohomology of the complement of
  anormal-crossing divisor, and finally
\item what is the sheaf-theoretic mechanism for computing relative cohomology.
\end{itemize}
The main result is Lemma~\ref{lemma:compute-union-complement-cohomology}.

\begin{definition}
Let \(\gamma:Z\to \mathbb{A}^1\) be a morphism between smooth algebraic
varieties. We define the \emph{exponential \(\mathscr{D}\)-module} on \(Z\) to be
\begin{equation}
\exp(\gamma):=\gamma^\ast(\mathscr{D}_{\mathbb{A}^1}/(\partial_t-1))=\gamma^!(\mathscr{D}_{\mathbb{A}^1}/(\partial_t-1))[1-\dim Z].
\end{equation}
This is a holonomic, but generally irregular, \(\mathscr{D}\)-module on \(Z\).
\end{definition}

\begin{situation}\label{Fourier:Laplace:transform:Dwork:complex:diagram}
  Let \(X\) be a smooth algebraic variety and \(\pi:E\to X\) be a rank \(r\) vector bundle.
  Let \(\sigma:X\to E^\vee\) be a section of the dual bundle and \(\Sigma\) be the
  reduced zero scheme of \(\sigma\). We consider the following commutative
  diagram, which will be used frequently throughout this note.
  \begin{equation*}
    \begin{tikzcd}
      X \ar{r}{\iota} & E^{\vee} & E \times_X E^{\vee} \ar[swap]{l}{\mathrm{pr}_2} \ar{r}{\gamma} & \mathbb{A}^{1}. \\
      \Sigma \ar{u}{i} \ar[swap]{r}{i} & X \ar{u}{\sigma} & E \ar{l}{\pi} \ar{u}{\varepsilon} \ar[ur, "F"'] &
    \end{tikzcd}
  \end{equation*}
  In this diagram, \(\iota\) the embedding of the zero section, \(\gamma\) is the
  natural pairing, \(i\) is the closed embedding, \(\varepsilon\) is the
  pullback of \(\sigma\), and \(F=\gamma\circ\varepsilon\).
\end{situation}

\begin{definition}
  Let notation be as in
  Situation~\ref{Fourier:Laplace:transform:Dwork:complex:diagram}.
  For a holomonic complex \(\mathcal{M}\in D^b_h(\mathscr{D}_X)\), we define
  \emph{the Dwork complex} of \(\mathcal{M}\), denoted by
  \(\mathrm{Dw}_{E/X}(\mathcal{M})\) or simply \(\mathrm{Dw}(\mathcal{M})\), to
  be
  \begin{equation}
    \mathrm{Dw}(\mathcal{M}):=\pi_+(\pi^{\ast}\mathcal{M}\otimes \exp(F)).
  \end{equation}
  A priori, \(\mathrm{Dw}_{E/X}(\mathcal{M})\) seems to be irregular, but the
  following theorem implies it is in fact regular.
\end{definition}

\begin{theorem}\label{Dwork:theorem}
  In Situation~\ref{Fourier:Laplace:transform:Dwork:complex:diagram}, we have an isomorphism
  \begin{equation*}
    a(\mathcal{M}) : \mathrm{Dw}(\mathcal{M})\simeq R\Gamma_{[\Sigma]}(\mathcal{M})[r]\simeq
    R\Gamma_{[\Sigma]}(\mathcal{O}_X)\otimes_{\mathcal{O}_{X}} \mathcal{M}[r]
  \end{equation*}
  for any object \(\mathcal{M} \in D^b_h(\mathscr{D}_X)\).
\end{theorem}

The theorem was obtained by many people in various situations:
by Katz~\cite{1968-Katz-on-the-differential-equations-satisfied-by-period-matrices} when \(X\) is affine and \(E\)
is the trivial line bundle,
by Adolphson--Sperber~\cite{2000-Adolphson-Sperber-dwork-cohomology-de-rham-cohomology-and-hypergeometric-functions}
when \(X\) is the affine space but \(E\) can have higher rank, and by
Dimca--Maaref--Sabbah--Saito~\cite{2000-Dimca-Maaref-Sabbah-Saito-dwork-cohomology-and-algebraic-d-modules},
independently Baldassarri--D'Agnolo~\cite{2003-Baldassarri-DAgnolo-on-dwork-cohomology-and-algebraic-d-modules},
when \(X\) and \(E\) are both general.
The reader is referred to~\cite{2003-Baldassarri-DAgnolo-on-dwork-cohomology-and-algebraic-d-modules}
for a proof of the theorem as its proof is more related to our note.

For us, what is important is that the isomorphy for a general \(\mathcal{M}\) is obtained
from the isomorphy for \(\mathcal{O}_{X}\) by tensoring with
\(\mathrm{id}_{\mathcal{M}}\). This point is clear from Baldassarri--D'Agnolo's proof
we just cited, i.e., the following diagram commutes
\begin{equation*}
  \begin{tikzcd}
    \mathrm{Dw}(\mathcal{M}) \ar[d,"{a(\mathcal{M})}"] \ar[r,"{\sim}"] & \mathrm{Dw}(\mathcal{O}_X) \otimes \mathcal{M}
    \ar[d,"{a(\mathcal{O}_X)\otimes\mathrm{id}_{\mathcal{M}}}"]\\
    R\Gamma_{[\Sigma]}(\mathcal{M})[r] \ar[r,"{\sim}"] &    R\Gamma_{[\Sigma]}(\mathcal{O}_X)[r] \otimes \mathcal{M}
  \end{tikzcd}
\end{equation*}
In the diagram the horizontal arrows are the canonical isomorphisms.

\begin{situation}%
  \label{situation:union-hypersurface-complement-cohomology}
  Next let us explain how to compute the cohomology of the complement of a
  normal crossing divisor using an exponential twist. Consider the following
  situation.
  \begin{itemize}
  \item Let \(X\) be a smooth algebraic variety.
  \item Let \(\mathcal{L}_{1},\ldots,\mathcal{L}_{r}\) be \(r\) invertible
    sheaves on \(X\).
  \item Let
    \(\mathbb{L}_{m}=\mathit{Spec}(\mathrm{Sym}^{\bullet}\mathcal{L}_{m}^{\vee})\), for
    \(m=1,\ldots,r\). Let \(\mathbb{L}_{m}^{\vee}\) be the dual of
    \(\mathbb{L}_{m}\).
  \item Let
    \(\mathbb{L}=\mathbb{L}_{1}\times_{X}\cdots\times_{X}\mathbb{L}_{r}\). This
    is a vector bundle of rank \(r\). Let
    \(\pi:\mathbb{L}\to X\) be the bundle projection.
  \item Let \(\sigma_m \in \mathrm{H}^{0}(X,\mathcal{L}_{m}^{\vee})\).
    Let \(H_m = V(\sigma_m) \subset X\), and \(H = \bigcup_{m=1}^{r}H_{m}\).
    Let \(\rho: U = X \setminus H \to X\) be the open immersion.
    For a subset \(I \subset \{1,2,\ldots,r\}\),
    let \(H_I = \bigcap_{m\in I} H_m\). For an integer \(k\), define
    \(H_{(k)}=\coprod_{\#{I}=k}H_{I}\). Let \(i_{k}:H_{(k)} \to X\) be the
    natural morphism induced by the inclusions.
  \item The sections \(\sigma_m\) define a function \(F: \mathbb{L} \to \mathbb{A}^{1}\).
  \item Let \(Z_m \subset \mathbb{L}\) be the fiber product
    \begin{equation*}
      \mathbb{L}_{1} \times_X \cdots \times_{X} \mathbb{L}_{m-1} \times_{X} [0_{m}] \times_X \mathbb{L}_{m+1}\cdots \times_{X} \mathbb{L}_{r}
    \end{equation*}
    where \([0_{m}]\) is the zero section of \(\mathbb{L}_{m}\). Then \(Z_{m}\)
    are Cartier divisors on \(\mathbb{L}\).
  \item For \(I \subset \{1,2,\ldots,r\}\), let \(Z_{I}=\bigcap_{m\in I}Z_{m}\).
    Then \(Z_{I}\) are vector bundles over \(X\) of rank \(r - \#I\).
    Hence \(Z=Z_{1}\cup \ldots\cup{}Z_{m}\) is a simple normal divisor of \(\mathbb{L}\). Let
    \(Z_{(k)}=\coprod_{\#I=k} Z_{I}\). Let \(\iota_{k}: Z_{(k)} \to \mathbb{L}\)
    be the natural morphism.
  \item Let \(\theta: \mathbb{L}^{\circ} \to \mathbb{L}\) be the open immersion
    whose complement is \(Z\).
  \end{itemize}
\end{situation}

\begin{lemma}%
  \label{lemma:compute-union-complement-cohomology}
  Let notation be as in
  Situation~\ref{situation:union-hypersurface-complement-cohomology}. Let \(\mathcal{M}\)
  be a bounded complex of \(\mathscr{D}_{X}\)-modules with regular singular,
  holonomic cohomology sheaves.
  Assume that either \(r=1\), or \(r>1\) but
  assume further that the \(H_{i}\)'s meet transversely.
  Then there is an isomorphism
  \begin{equation*}
    \rho_{+}\rho^{!}\mathcal{M}
    \simeq \pi_{+}((\theta_{!}\theta^{!}(\pi^{\ast}\mathcal{M})) \otimes \exp(F)).
  \end{equation*}
\end{lemma}

\begin{proof}
Let \(\mathcal{K}\) be a regular holonomic complex of
\(\mathscr{D}_{\mathbb{L}}\)-modules. Using the covariant Riemann--Hilbert
correspondence, that \(Z\) is a normal crossing divisor, and results in
topology, we infer that the complex \(\theta_{!}\theta^{!}\mathcal{K}\) can be computed by
the following double complex
\begin{equation*}
\mathcal{K} \to \iota_{1+}\iota_{1}^{+}\mathcal{K} \to \iota_{2+}\iota_{2}^{+}\mathcal{K}
\to \cdots \to \iota_{r+}\iota_{r}^{+}\mathcal{K}.
\end{equation*}
Since \(\pi\) is affine, and since the complexes above underlie complexes of
quasi-coherent \(\mathcal{O}_{\mathbb{L}}\)-modules, \(\pi_{+}\) has no higher
direct images. Thus, if we take \(\mathcal{K} = \pi^{\ast}\mathcal{M}\), we see
\begin{equation*}
  \pi_{+}((\theta_{!}\theta^{!}(\pi^{\ast}\mathcal{M})) \otimes \exp(F))
\end{equation*}
is computed by the direct image
\begin{equation}
  \label{eq:compute-direct-image}
  \pi_{+}( \pi^{\ast}\mathcal{M}\otimes \exp(F)) \to
  \pi_{+}(\iota_{1+}\iota_{1}^{+}\pi^{\ast}\mathcal{M}  \otimes \exp(F)) \to  \cdots \to
  \pi_{+}(\iota_{r+}\iota_{r}^{+}\pi^{\ast}\mathcal{M} \otimes \exp(F)).
\end{equation}
In what follows, we shall compute the complexes
\(\pi_{+}(\iota_{k+}\iota_{k}^{+} \pi^{\ast}\mathcal{M} \otimes \exp(F))\).
We claim that there is an isomorphism
\begin{equation}
  \label{eq:compare-dw-with-cohomology-with-support}
  \pi_{+}(\iota_{k+}\iota_{k}^{+}\pi^{\ast}\mathcal{M}
  \otimes\exp(F))\simeq{i}_{k!}i_{k}^{!}\mathcal{M}[r].
\end{equation}

Using projection formula, we have
\begin{equation*}
\pi_{+}(\iota_{k+}\iota_{k}^{+}\pi^{\ast}\mathcal{M}\otimes\exp(F))
\simeq (\pi\circ\iota_{k})_{+}(\iota_{k}^{+}\pi^{\ast}\mathcal{M}\otimes\iota_{k}^{\ast}\exp(F)).
\end{equation*}
Since \(Z_{(k)}\) is a disjoint union, we can treat each \(Z_I\) separately.
We shall only consider the case \(I=\{r-k+1,\ldots,r\}\).
The proofs for the rest cases are the same,
but one needs to use different index sets.
The variety \(Z_{I}\) is the same as the bundle
\(\mathbb{L}_{1} \times_{X} \cdots \times_{X} \mathbb{L}_{r-k}\). Let \(\pi_{k}\)
be the bundle projection
\(\mathbb{L}_{1} \times_{X} \cdots \times_{X}\mathbb{L}_{r-k} \to X\).

To simplify the notation, set \(\nu_k = \iota_{k}|_{Z_{I}}\).
Note that
\(\pi \circ \nu_k = \pi_{k}\). Moreover, the
function \(F|_{Z_{I}}\) is identified with the function
\(F_{k}:\mathbb{L}_{1}\times_{X}\cdots\times_{X}\mathbb{L}_{r-k}\to\mathbb{A}^{1}\)
defined by the sections \(\sigma_{1},\ldots,\sigma_{r-k}\). Then on \(Z_{I}\) we have
\begin{align*}
  &(\pi\circ\iota_{k}|_{Z_{I}})_{+}(\iota_{k}^{+}\pi^{\ast}\mathcal{M}\otimes\iota_{k}^{\ast}\exp(F))|_{Z_{I}} \\
  \simeq&~(\pi\circ\nu_{k})_{+}(\nu_{k}^{+}\pi^{\ast}\mathcal{M} \otimes \nu_{k}^{\ast}\exp(F)) \\
  \simeq&~\pi_{k+}(\nu_{k}^{+}\pi^{+}\mathcal{M}[r] \otimes \exp(F_k)) \\
  \simeq&~\pi_{k+}(\pi_{k}^{\ast}\mathcal{M}[k] \otimes \exp(F_k)) \\
  \simeq&~\mathrm{Dw}_{\mathbb{L}_{1} \times_{X} \cdots \times_{X}\mathbb{L}_{r-k}/X}(\mathcal{M})[k]
  \xrightarrow{\sim} R\Gamma_{[H_1 \cap \cdots \cap H_{r-k}]}(\mathcal{M})[r].
\end{align*}
In the last step we used the isomorphism provided by Theorem~\ref{Dwork:theorem}.
This completes the proof of~\eqref{eq:compare-dw-with-cohomology-with-support}.
Thus, the direct image
\(\pi_{+}((\theta_{!}\theta^{!}(\pi^{\ast}\mathcal{M}))\otimes\exp(F))\) is computed by the
a double complex of the form
\begin{equation}\label{eq:compute-pushed-cohomology}
i_{r!}i_{r}^{!}\mathcal{M} \xrightarrow{a_r(\mathcal{M})}
i_{r-1,!}i_{r-1}^{!} \mathcal{M} \xrightarrow{a_{r-1}(\mathcal{M})} \cdots
\to i_{1!}i_{1}^{!}\mathcal{M} \xrightarrow{a_1(\mathcal{M})} i_{0!}i_{0}^{!}\mathcal{M} = \mathcal{M}.
\end{equation}
The arrows \(a_{m}(\mathcal{M})\) are obtained by pushing forward the arrows
in~\eqref{eq:compute-direct-image}. On the other hand, there are canonical arrows
\begin{equation*}
b_m(\mathcal{M}) : i_{m!}i_{m}^{!}\mathcal{M} \to i_{m-1,!}i_{m-1}^{!}\mathcal{M}
\end{equation*}
[These canonical arrows are from the distinguished triangle
\begin{equation*}
u_{+}u^{!}\mathcal{K} \to \mathcal{K} \to v_{+}v^{!}\mathcal{K} \to
\end{equation*}
associated with a closed embedding \(u\) of spaces (whose complement open
immersion denoted by \(v\)).]

We do not know whether \(a_m(\mathcal{M})=b_{m}(\mathcal{M})\). They probably
do equal.
But we only need to know that they differ by a constant on each direct summand.
To see this, we first assume \(\mathcal{M}=\mathcal{O}_{X}\). In this case, on
each \(H_{I}\), we know that
\(\mathrm{Hom}_{\mathscr{D}_{H_I}}(\mathcal{O}_{H_I},\mathcal{O}_{H_{I}})=\mathbb{C}^{c}\)
where \(c\) is the number of connected components of \(H_{I}\). Let \(u: H_{I}
\to H_{J}\) be the inclusion for some \(J\) such that \(\#J-\#I=1\). Then
\(a_{m}(\mathcal{O}_X)\) has a direct summand which is induced by the shift of
an arrow
\begin{equation*}
u_{!}u^{!}\mathcal{O}_{H_{J}} \to \mathcal{O}_{H_{J}}.
\end{equation*}
But by the previous equality, such an arrow, if nonzero, is unique up to a
nonzero scalar on each component. Since \(a_{m}(\mathcal{M})\) is obtained by
\(a_{m}(\mathcal{O}_X)\otimes\mathrm{id}_{\mathcal{M}}\) by the proof of
Theorem~\ref{Dwork:theorem}, we conclude that \(a_{m}(\mathcal{M})\) agrees with the
canonical one up to a scalar on each of its direct summand.

To conclude, we can deduce from the above that the double
complex~\eqref{eq:compute-pushed-cohomology} is isomorphic to the double complex
\begin{equation}\label{eq:final-result}
  i_{r!}i_{r}^{!}\mathcal{M} \xrightarrow{b_r(\mathcal{M})} i_{r-1,!}i_{r-1}^{!} \mathcal{M}
  \xrightarrow{b_{r-1}(\mathcal{M})} \cdots \to i_{1!}i_{1}^{!}\mathcal{M}
  \xrightarrow{b_1(\mathcal{M})} i_{0!}i_{0}^{!}\mathcal{M} = \mathcal{M}.
\end{equation}
When \(r=1\), we have won. Assume below that \(r>1\).
Applying the duality, we see \eqref{eq:final-result} is the dual of the complex
\begin{equation*}
  i_{r+}i_{r}^{+}\mathbb{D}\mathcal{M}  \leftarrow \cdots
  \leftarrow i_{1+}i_{1}^{+}\mathbb{D}\mathcal{M} \leftarrow \mathbb{D}\mathcal{M}
\end{equation*}
which is the complex that computes \(\rho_{!}\rho^{!}\mathbb{D}\mathcal{M}\). This can be
seen, again, by applying the covariant Riemann--Hilbert correspondence. Under
Riemann--Hilbert correspondence the complex becomes the standard
Mayer--Vietories type complex computing the cohomology of a normal crossing
variety --- this is the reason that we assumed, when \(r>1\), that \(\cup H_i\)
is a normal crossing divisor. Taking Verdier dual again, we conclude
that~\eqref{eq:final-result} computes \(\rho_{+}\rho^{+}\mathcal{M}\). This completes the
proof.
\end{proof}

\begin{situation}%
  \label{situation:relative-cohomology}
  \textbf{Relative cohomology.}~We review the mechanism of computing relative
  cohomology in terms of the language of \(\mathscr{D}\)-modules (via the
  Riemann--Hilbert correspondence).

  Let \(X\) be a smooth, proper, algebraic
  variety. Let \(b: T \to X\) be an affine open immersion with a complement
  divisor \(D\), possibly singular. Let \(Y\) be a Cartier divisor on \(X\) with
  complement \(U\). Assume that \(Y\) is smooth. Let \(Y_T = Y \cap T\),
  \(Y_{D}=Y \cap D\), etc. Let \(\mathcal{M}\in D^{b}(\mathscr{D}_{X})\).
  We form the following diagram
  \begin{equation}
    \label{eq:relative-situation}
    \begin{tikzcd}
      Y_T \ar[r,"i_T"]  \ar[d,"a"] & T \ar[d,"b"] & U_{T} \ar[l,swap,"\rho_T"] \ar[d,"c"] \\
      Y \ar[r,"i"] & X & U \ar[l,swap,"\rho"] \\
      Y_D \ar[u,"u"] \ar[r,"i_D"] & D\ar[u,"v"] & U_D\ar[l,swap,"\rho_D"]
      \ar[u,"w"]
    \end{tikzcd}.
  \end{equation}
  There is a complex of \(\mathscr{D}\)-modules that ``computes'' the relative cohomology of
  \(\rho^!\mathcal{M}\) on \(U\) with respect to \(U_D\) (in the sense that it
  corresponds to the complex whose cohomology is relative cohomology, via the
  Riemann--Hilbert correspondence): this is \(c_!c^!(\rho^!\mathcal{M})\).

  Let us explain this if the reader is not familiar to this formalism. In fact
  if \(\rho^!\mathcal{M}\) corresponds to a constructible complex \(F\) under the covariant
  Riemann--Hilbert functor, then there is a distinguished triangle
  \begin{equation*}
    c_{!}c^{!}F \to F \to w_{\ast}w^{\ast}F \to
  \end{equation*}
  (taking \(R\Gamma(U,-)\) recovers the usual long exact sequence of sheaf
  cohomology groups). The corresponding version of the above distinguished triangle in
  the language of \(\mathscr{D}\)-modules is
  \begin{equation*}
    c_! c^! (\rho^{!}\mathcal{M}) \to \rho^{!}\mathcal{M} \to w_+ w^+ (\rho^!\mathcal{M}) \to.
  \end{equation*}

Note that \(\rho\) is an open embedding, which implies
\(\rho^{!}=\rho^{\ast}\) and \(\rho_{T}^{!}=\rho_{T}^{\ast}\).
Manipulating with base change functors we have
 \begin{equation*}
   c_!c^!\rho^!\mathcal{M} =  c_!\rho_T^!b^!\mathcal{M} =\rho^!b_!b^!\mathcal{M}.
 \end{equation*}

 We can put the discussion above in the relative setting. Assume that all schemes
 in the diagram~\eqref{eq:relative-situation} are schemes over a smooth
 \(\mathbb{C}\)-variety \(B\) and let \(p: X \to B\) be the structure morphism.
 Then the ``variation'' of the relative cohomology groups (up to a shift) of the
 pair \((U, U\cap D)\) is computed by the complex
 \begin{equation}
   \label{eq:rh-relative-cohomology}
   Rp_{\ast}(X, R\rho_\ast \rho^{\ast} b_{!}\mathbb{C}[\dim T]) = \mathrm{RH}(p_+\rho_+\rho^!b_!\mathcal{O}_T),
 \end{equation}
 where \(\mathrm{RH}\) stands for the \emph{covariant} Riemann--Hilbert
 correspondence. Recall that its Verdier dual is the derived solution complex:
 \((\text{Verdier duality}) \circ \mathrm{RH} = \mathrm{Sol}\).
\end{situation}

\section{A lemma}
\label{sec:transition}
In this section, we prove a lemma which will be used to explain how the Dwork
complexes (and its siblings used in the previous section) can be related to the
Fourier--Laplace transform of a certain complexes.

\begin{situation}
\label{transition:lemma:situation}
\textbf{Setting.}~Retain the notation set up in
Situation~\ref{situation:union-hypersurface-complement-cohomology}. In addition,
we make the following assumptions.
\begin{itemize}
\item Assume that \(X\) is proper of pure dimensional \(n\).
\item Assume that each \(\mathcal{L}_i^\vee\) is generated by its global sections.
  Put \(V_i=\mathrm{H}^0(X,\mathcal{L}_i^\vee)^\vee\). We have morphisms
  \(X\to\mathbb{P}(V_i)\) and their product \(X\to \prod_{i=1}^r \mathbb{P}(V_i)\).
  Let \(V=V_1\times\cdots\times V_r\).
\item We will regard each \(V_i\) as an algebraic variety, which is
  \(\mathrm{Spec}(\mathrm{Sym}^\bullet(V_i^\vee))\).
\item For each \(i\), we denote by \(\pi_i\) the
  projection morphism \(\pi_i:\mathbb{L}_i\to X\),
\item For each \(i\), we have a morphism \(b_i:\mathbb{L}_i\to V_i\), which
  contracts the zero section to the origin. This can be obtained as follows.
  \(V_{i}^{\vee}\otimes\mathcal{O}_{X}\to \mathcal{L}_{i}^{\vee}\) gives a
  morphism of algebraic varieties \(\mathbb{L}_{i} \to V_{i}\times X\). Then
  \(b_{i}\) is the composition \(\mathbb{L}_{i}\to V_{i}\times X\to V_{i}\). Let
  \(b:\mathbb{L}\to V\) be their product.
\item Let \(\mathbb{L}_i^\circ\) be the subset of \(\mathbb{L}_i\) with its zero
  section removed. Thus
  \(\mathbb{L}_1^\circ\times_X\cdots\times_X\mathbb{L}_r^\circ\) is just
  \(\mathbb{L}^\circ\). Let \(\theta_i:\mathbb{L}_i^\circ\to\mathbb{L}_i\) be the
  open inclusion. Thus \(\theta:\mathbb{L}^\circ\to\mathbb{L}\) is the relative
  product of \(\theta_{i}\).
\item Let \(\widetilde{\mathbb{L}}\) be the pullback of \(\mathbb{L}\) via the
  natural projection \(X\times V^\vee\to X\). \(\widetilde{\mathbb{L}}^\vee\) is
  defined in the same manner. Let \(\tilde{\sigma}:X\times V^\vee\to
  \widetilde{\mathbb{L}}^\vee\) be the universal section. We will write
  \(\tilde{\sigma}=(\tilde{\sigma}_1,\ldots,\tilde{\sigma}_r)\).
  \(Y_i=\{\tilde{\sigma}_i=0\}\), \(1\le i\le r\),
  are the \emph{universal hypersurface} determined by \(\tilde{\sigma}\).
  Note that \(\cup_{i=1}^r Y_i\)
  is a simple normal crossing divisor.
\item All the morphisms obtained via the pullback along \(X\times V^\vee\to X\)
  are denoted by the same symbol with a `tilde'. For instance, the map
  \(\widetilde{\mathbb{L}}\to X\times V^\vee\) will be denoted by \(\tilde{\pi}\)
  according to our convention.
\item Put \(\iota=b\circ\theta:\mathbb{L}^\circ\to V\).
\end{itemize}
These data form the following commutative diagram (cf.~\S\ref{Fourier:Laplace:transform:Dwork:complex:diagram}).
  \begin{equation*}
    \begin{tikzcd}
      \widetilde{\mathbb{L}}^{\vee} & \widetilde{\mathbb{L}}^{\vee} \times_{X\times V^\vee} \widetilde{\mathbb{L}} \ar[swap]{l}{\mathrm{pr}_1} \ar{r}{\gamma} & \mathbb{A}^{1} \\
      X\times V^\vee \ar{u}{\widetilde{\sigma}} & \widetilde{\mathbb{L}} \ar{l}{\widetilde{\pi}} \ar{u}{\widetilde{\varepsilon}} \ar[ur, "F"']
    \end{tikzcd}
  \end{equation*}
In this diagram,
\begin{itemize}
	\item \(\widetilde{\varepsilon}\) is the pullback of \(\widetilde{\sigma}\),
	\item \(\gamma\) is the canonical dual pairing, and
	\item \(F=\gamma\circ\widetilde{\varepsilon}\).
\end{itemize}
\end{situation}

\begin{lemma}\label{lemma:transition}
  Let notation be as in \S\ref{transition:lemma:situation}. Let \(\mathcal{M}\) be
  a holonomic complex of \(\mathscr{D}\)-modules on \(\mathbb{L}^\circ\). Then
  \begin{equation}
    \mathrm{FT}(\iota_!\mathcal{M})=\mathrm{pr}_{V^\vee+}(\mathrm{pr}^!_{\mathbb{L}}\theta_!\mathcal{M}\otimes \exp(F))[-\dim V].
  \end{equation}
  Here \(\mathrm{pr}_{V^\vee}:\mathbb{L}\times V^\vee\to V^\vee\) and
  \(\mathrm{pr}_{\mathbb{L}}:\mathbb{L}\times V^\vee\to \mathbb{L}\) are
  projections.
\end{lemma}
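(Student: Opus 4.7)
The plan is to unwind the Fourier--Laplace transform on the left-hand side and then manipulate it via base change and the projection formula until it matches the right-hand side. Recall that the Fourier--Laplace transform of a holonomic $\mathcal{D}$-module $\mathcal{N}$ on $V$ can be written as
\[
\mathrm{FT}(\mathcal{N}) = p_{V^\vee\,+}\bigl(p_V^{!}\mathcal{N}\otimes \exp(\langle\cdot,\cdot\rangle)\bigr)[-\dim V],
\]
where $p_V, p_{V^\vee}$ are the projections from $V\times V^\vee$ and $\langle\cdot,\cdot\rangle\colon V\times V^\vee\to\mathbb{A}^1$ is the canonical pairing. A preliminary observation is that $b\colon\mathbb{L}\to V$ is proper: since each $\mathcal{L}_i^\vee$ is globally generated, dualizing the evaluation surjection $V^\vee\otimes\mathcal{O}_X\twoheadrightarrow\mathcal{L}^\vee$ realizes $\mathbb{L}$ as a closed subscheme of the trivial bundle $V\times X$, and projecting to $V$ is proper since $X$ is. Consequently $\iota_{!}=b_{+}\theta_{!}$.

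The heart of the argument is base change applied to the Cartesian square
\[
\begin{tikzcd}
\mathbb{L}\times V^\vee \ar[r,"\tilde{b}"] \ar[d,"\mathrm{pr}_\mathbb{L}"'] & V\times V^\vee \ar[d,"p_V"] \\
\mathbb{L} \ar[r,"b"'] & V,
\end{tikzcd}
\]
with $\tilde{b}=b\times\mathrm{id}$. This gives $p_V^{!}(b_{+}\theta_{!}\mathcal{M})= \tilde{b}_{+}\mathrm{pr}_\mathbb{L}^{!}(\theta_{!}\mathcal{M})$. The next observation, verified by tracing through the definitions in \S\ref{transition:lemma:situation}, is that $F=\langle\cdot,\cdot\rangle\circ\tilde{b}$ as maps $\mathbb{L}\times V^\vee\to\mathbb{A}^1$: for $(l,v^\vee)\in\mathbb{L}\times V^\vee$, $F(l,v^\vee)=\langle\sigma(\pi(l),v^\vee),l\rangle$ equals $\langle b(l),v^\vee\rangle$ by the very definition of $b$, namely that $v^\vee\in\mathrm{H}^0(X,\mathcal{L}^\vee)$ evaluated at $\pi(l)$ pairs with $l$ the same way that $v^\vee$ pairs with $b(l)$ in $V$. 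Hence $\exp(F)=\tilde{b}^{*}\exp(\langle\cdot,\cdot\rangle)$.

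With this in hand, the projection formula for $\mathcal{D}$-modules applied to $\tilde{b}$ yields
\[
\tilde{b}_{+}\bigl(\mathrm{pr}_\mathbb{L}^{!}\theta_{!}\mathcal{M}\bigr)\otimes \exp(\langle\cdot,\cdot\rangle) \cong \tilde{b}_{+}\bigl(\mathrm{pr}_\mathbb{L}^{!}\theta_{!}\mathcal{M}\otimes \exp(F)\bigr).
\]
Applying $p_{V^\vee\,+}$ and using the identity $p_{V^\vee}\circ\tilde{b}=\mathrm{pr}_{V^\vee}$ to collapse the composition of pushforwards, together with the shift $[-\dim V]$ from the Fourier--Laplace transform, gives the asserted formula.

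The main nuisance I anticipate is the bookkeeping: confirming the properness of $b$, carefully extracting the identification $F=\langle\cdot,\cdot\rangle\circ\tilde{b}$ from the definitions, and ensuring that the projection formula and base change are applied with the correct shifts in the derived holonomic setting. The argument itself is entirely formal once these pieces are in place.
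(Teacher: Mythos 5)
Your proposal is correct and follows essentially the same route as the paper: use the properness of $b$ to write $\iota_!=b_+\theta_!$, apply base change on the Cartesian square relating $\mathrm{pr}_V$ and $b$ to pass $\mathrm{pr}_V^!$ past $b_+$, identify $\exp(F)$ with the pullback of the Fourier kernel along $\mathrm{Id}\times b$, invoke the projection formula, and finally collapse $\mathrm{pr}_{V^\vee+}\circ(\mathrm{Id}\times b)_+$. The paper is slightly terser (it does not spell out the properness of $b$ nor where the projection formula enters), so your added detail on the closed embedding $\mathbb{L}\hookrightarrow V\times X$ and on the identification $F=\mathrm{can}\circ(\mathrm{Id}\times b)$ is a useful clarification rather than a departure.
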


\begin{proof}
  Look at the following commutative diagram
  \begin{equation*}
    \begin{tikzcd}
      \widetilde{\mathbb{L}}^{\circ} \ar[equal]{d} \ar{r}{\widetilde{\theta}} & \widetilde{\mathbb{L}} \ar[equal]{d}  &  &\\
      V^{\vee} \times \mathbb{L}^{\circ} \ar{r}{\widetilde{\theta}} \ar{d}{\mathrm{pr}_{\mathbb{L}^{\circ}}} & V^{\vee} \times \mathbb{L}  \ar{r}{\mathrm{Id}\times b} \ar{d}{\mathrm{pr}_{\mathbb{L}}} & V^{\vee}\times V \ar{d}\ar{d}{\mathrm{pr}_{V}} \ar{r}{\text{can}} & \mathbb{A}^1 \\
      \mathbb{L}^{\circ} \ar[bend right]{rr}{\iota}\ar{r}{\theta} & \mathbb{L}
      \ar{r}{b} & V &
    \end{tikzcd}.
  \end{equation*}
  Since \(b\) is proper, \(b_+ = b_!\). Thus
  by the base change theorem, and the
  commutativity of the entire lower rectangle, we have
  \begin{equation*}
    \mathrm{pr}_{V}^{!}\iota_{!}\mathcal{M}
    = \mathrm{pr}_{V}^{!} b_+ \theta_{!} \mathcal{M}
    = (\mathrm{Id}\times b)_+ \mathrm{pr}_{\mathbb{L}}^{!}\theta_{!}\mathcal{M}.
  \end{equation*}
  Hence
  \begin{align*}
    \mathrm{FT}(\iota_{!}\mathcal{M})
    &= \mathrm{pr}_{V^{\vee}+} (\mathrm{pr}_{V}^{!}\iota_{!}\mathcal{M} \otimes \exp\mathrm{can}) [-\dim V]\\
    &= (\mathrm{pr}_{V^{\vee}+} \circ (\mathrm{Id}\times b))_+ (\mathrm{pr}_{\mathbb{L}}^{!}\theta_{!}\mathcal{M} \otimes \exp\mathrm{can})[-\dim V] \\
    &= \mathrm{pr}_{V^{\vee}+}(\mathrm{pr}_{\mathbb{L}}^{!}\theta_{!}\mathcal{M} \otimes \exp(F))[-\dim V],
  \end{align*}
  as claimed.
\end{proof}

\section{Proof of the main theorem}

We prove our main theorem in this section.

Given an \((r+n)\times N\) integral matrix \(A\),
the columns of \(A\) give rise to a morphism
\(\tau: (\mathbb{C}^\ast)^r\times T\to \prod_{i=1}^r \mathbb{P}V_i\) as
in Situation~\ref{situation:toric-notation}.
Let \(X'\subset\mathbb{P}V_1\times\cdots\times\mathbb{P}V_r\) be the closure of the image.
It is a possibly singular toric variety.
Choose a toric desingularization \(X\to X'\).
Each \(X\to \mathbb{P}V_i\) determines a base point free line bundle \(\mathcal{L}_i^\vee\).
We are now in the situation of~\ref{transition:lemma:situation}.
Let us retain the notation there.

We have a sequence of maps.
\begin{equation*}
  \begin{tikzcd}
    (\mathbb{C}^{\ast})^r \times T \ar[rrr,bend left,swap,"\tau"] \ar[r,"j"] & \mathbb{L}^{\circ} \ar[rr,bend right,"\iota"] \ar[r,"\theta"]& \mathbb{L} \ar[r,"b"] & V
  \end{tikzcd}
\end{equation*}
On the one hand, owing to our hypothesis on \(A\),
Reichelt's result in~\cite{2014-Reichelt-laurent-polynomials-GKZ-hypergeometric-systems-and-mixed-hodge-modules}*{Proposition~1.14} implies
\begin{equation}
\mathrm{FT}(\tau_!\mathcal{O}_{(\mathbb{C}^{\ast})^r \times T})=M_{A,\beta}.
\end{equation}
On the other hand, applying Lemma~\ref{lemma:transition} to
\(\mathcal{M}=j_!\mathcal{O}_{(\mathbb{C}^{\ast})^r \times T}\), we obtain
\begin{equation}\label{Fourier:transform:hypergeometric:equation:to:Dwork}
\mathrm{FT}(\iota_!j_!\mathcal{O}_{(\mathbb{C}^{\ast})^r \times T})=\mathrm{pr}_{V^\vee+}(\mathrm{pr}^!_{\mathbb{L}}\theta_!j_!\mathcal{O}_{(\mathbb{C}^{\ast})^r \times T}\otimes \exp(F))[-\dim V].
\end{equation}
To proceed, we consider the following commutative diagram.
\begin{equation*}
  \begin{tikzcd}
   ((\mathbb{C}^{\ast})^r \times T)\times V^\vee \ar[d,"\mathrm{pr}_2"'] \ar[r,"\tilde{j}"] & \widetilde{\mathbb{L}}^{\circ}  \ar[r,"\tilde{\theta}"] \ar[d,swap,"\mathrm{pr}_{\mathbb{L}^{\circ}}"] & \widetilde{\mathbb{L}} \ar[d, "\mathrm{pr}_{\mathbb{L}}"] \\
  (\mathbb{C}^{\ast})^r \times T \ar[r,"j"]  & \mathbb{L}^{\circ} \ar[r,"\theta"] & \mathbb{L}.
  \end{tikzcd}
\end{equation*}
Note the all vertical maps are smooth.
We have \(\mathrm{pr}^\ast_\bullet=\mathrm{pr}^!_\bullet[-\dim V^\vee]=\mathrm{pr}^+_\bullet[\dim V^\vee]\).
An iterated application of the projection formula to this diagram yields
\begin{align}
\label{Fourier:to:Dwork:projection:formula}
\begin{split}
\mathrm{pr}^+_{\mathbb{L}}&\theta_!j_!\mathcal{O}_{(\mathbb{C}^{\ast})^r \times T}[\dim V^\vee]\\
&=\tilde{\theta}_!\mathrm{pr}^+_{\mathbb{L}^\circ}j_!\mathcal{O}_{(\mathbb{C}^{\ast})^r \times T}[\dim V^\vee]\\
&=\tilde{\theta}_!\tilde{j}_!\mathrm{pr}^+_2\mathcal{O}_{(\mathbb{C}^{\ast})^r \times T}[\dim V^\vee]\\
&=\tilde{\theta}_!\tilde{j}_!\mathrm{pr}^\ast_2\mathcal{O}_{(\mathbb{C}^{\ast})^r \times T}=\tilde{\theta}_! \tilde{j}_!\mathcal{O}_{((\mathbb{C}^{\ast})^r \times T)\times V^\vee}.
\end{split}
\end{align}
To compare this with the objects on \(X\),
we identify \(((\mathbb{C}^{\ast})^r \times T)\times V^\vee\) with \(\widetilde{\mathbb{L}}^\circ|_T\)
and look at the following commutative diagram.
\begin{equation*}
  \begin{tikzcd}
  \widetilde{\mathbb{L}}^\circ|_T \ar[d,"\tilde{j}"'] \ar[r,"\tilde{\theta}_T"] & \widetilde{\mathbb{L}}|_T \ar[r,"\tilde{\pi}_T"] \ar[d,swap,"\alpha"] & T\times V^\vee \ar[d, "\beta"] \\
  \widetilde{\mathbb{L}}^\circ \ar[r,"\tilde{\theta}"]  & \widetilde{\mathbb{L}} \ar[r,"\tilde{\pi}"] & X\times V^\vee.
  \end{tikzcd}
\end{equation*}
Since
\(\mathcal{O}_{\widetilde{\mathbb{L}}^\circ|_T}=(\tilde{\theta}_T)^+\mathcal{O}_{\widetilde{\mathbb{L}}|_T}=(\tilde{\theta}_T)^+(\tilde{\pi}_T)^+\mathcal{O}_{T\times V^{\vee}}[r]\),
the last quantity in \eqref{Fourier:to:Dwork:projection:formula} can be transformed into
\begin{align}
\begin{split}
\tilde{\theta}_!&\tilde{j}_!\mathcal{O}_{((\mathbb{C}^{\ast})^r \times T)\times V^\vee}\\
&=\tilde{\theta}_! \tilde{j}_!\mathcal{O}_{\widetilde{\mathbb{L}}^\circ|_T}\\
&=\tilde{\theta}_! \tilde{j}_!(\tilde{\theta}_T)^+(\tilde{\pi}_T)^+\mathcal{O}_{T\times V^\vee}[r]\\
&=\tilde{\theta}_! \tilde{\theta}^+\alpha_!\tilde{\pi}_T^+\mathcal{O}_{T\times V^\vee}[r]\\
&=\tilde{\theta}_! \tilde{\theta}^+\tilde{\pi}^+\beta_!\mathcal{O}_{T\times V^\vee}[r]\\
&=\tilde{\theta}_! \tilde{\theta}^!\tilde{\pi}^!\beta_!\mathcal{O}_{T\times V^\vee}[-r].
\end{split}
\end{align}
Plugging the displayed equation above
and \eqref{Fourier:to:Dwork:projection:formula} into \eqref{Fourier:transform:hypergeometric:equation:to:Dwork} yields
\begin{align}\label{Fourier:Dwork:complex:result}
\begin{split}
\mathrm{FT}(\iota_!j_!\mathcal{O}_{(\mathbb{C}^{\ast})^r \times T})&=\mathrm{pr}_{V^\vee+}(\mathrm{pr}^!_{\mathbb{L}}\theta_!j_!\mathcal{O}_{(\mathbb{C}^{\ast})^r \times T}\otimes \exp(F))[-\dim V]\\
&=\mathrm{pr}_{V^\vee+}(\tilde{\theta}_! \tilde{\theta}^!\tilde{\pi}^!\beta_!\mathcal{O}_{T\times V^\vee}\otimes \exp(F))[-r].\\
&=\mathrm{pr}_{V^\vee+}(\tilde{\theta}_! \tilde{\theta}^!\tilde{\pi}^\ast\beta_!\mathcal{O}_{T\times V^\vee}\otimes \exp(F)).
\end{split}
\end{align}
We recapitulate that the \(\mathrm{pr}_{V^\vee+}\) appeared above is the
projection from \(\mathbb{L} \times V^{\vee}\) to \(V^{\vee}\).

Now we can apply Lemma~\ref{lemma:compute-union-complement-cohomology}, by
\begin{itemize}
\item taking \(X\) in the lemma to be \(X\times V^\vee\),
\item taking \(\mathbb{L}\) in the lemma to be \(\widetilde{\mathbb{L}}\), and
\item taking \(\mathcal{M}\) in the lemma to be \(\beta_!\mathcal{O}_{T\times V^\vee}\).
\end{itemize}
We also use \(\mathrm{pr}_{V^{\vee}}\) to denote the projection from
\(X\times V^{\vee}\) to \(V^{\vee}\). Thus (by abuse of notation)
\(\mathrm{pr}_{V^{\vee}} = \mathrm{pr}_{V^{\vee}}\widetilde{\pi}\).
Thus Lemma~\ref{lemma:compute-union-complement-cohomology} implies that
\begin{equation*}
  \mathrm{pr}_{V^\vee+}(\tilde{\theta}_! \tilde{\theta}^!\tilde{\pi}^{\ast}\beta_!\mathcal{O}_{T\times V^\vee}\otimes \exp(F))
  = \mathrm{pr}_{V^{\vee}+}(\rho_{+}\rho^{!}\beta_{!}\mathcal{O}_{T\times V^{\vee}})
\end{equation*}
where \(\rho: X \times V^{\vee} - (\bigcup_{i=1}^{r} Y_{i}) \to X \times V^{\vee}\) is the open
immersion. By~\eqref{Fourier:Dwork:complex:result}, we have
\begin{equation*}
  M_{A,\beta}\simeq \mathrm{pr}_{V^{\vee}+}\rho_+\rho^! \beta_!\mathcal{O}_{T\times V^\vee}.
\end{equation*}

By~\eqref{eq:rh-relative-cohomology}, the
Riemann--Hilbert partner of the \(\mathscr{D}\)-module
\(\mathrm{pr}_{V^\vee+}\rho_+\rho^! \beta_!\mathcal{O}_{T\times V^\vee}\) is
\(R \mathrm{pr}_{V^\vee*}(\left.\beta_!\mathbb{C}[n]\right|_U)\).
For each \(x\in V^{\vee}\), let \(i_{x}\) be the closed immersion. Then we have
\begin{equation*}
  i_{x}^{!}R \mathrm{pr}_{V^\vee\ast}(\left.\beta_!\mathbb{C}[n]\right|_U)=R\Gamma(U_x,U_x\cap D)[n].
\end{equation*}
Taking the zeroth cohomology of the displayed complex and applying the duality between homology and cohomology,
we have
\begin{equation*}
\mathrm{Sol}^0(M_{A,\beta},\widehat{\mathcal{O}}_{V^{\vee},x}) = \mathrm{H}_{n}(U_x, U_x \cap D).
\end{equation*}

\begin{bibdiv}
\begin{biblist}

\bib{1994-Adolphson-hypergemetric-functions-and-rings-generated-by-monomials}{article}{
      author={Adolphson, Alan},
       title={Hypergeometric functions and rings generated by monomials},
        date={1994},
        journal={Duke Math. J.},
      volume={73},
      number={2},
       pages={269\ndash 290},
}

\bib{2000-Adolphson-Sperber-dwork-cohomology-de-rham-cohomology-and-hypergeometric-functions}{article}{
      author={Adolphson, Alan},
      author={Sperber, Steven},
       title={{Dwork cohomology, de Rham cohomology, and hypergeometric
  functions}},
        date={2000},
        journal={Amer. J. Math.},
      volume={122},
      number={2},
       pages={319\ndash 348},
}

\bib{1993-Aspinwall-Greene-Morrison-the-monomial-divisor-mirror-map}{article}{
    author={Aspinwall, Paul S.},
    author={Greene, Brian R.},
    author={Morrison, David R.},
     title={The monomial-divisor mirror map},
   journal={Internat. Math. Res. Notices},
      date={1993},
    number={12},
     pages={319\ndash 337},
}

\bib{2003-Baldassarri-DAgnolo-on-dwork-cohomology-and-algebraic-d-modules}{article}{
      author={Baldassarri, Francesco},
      author={D'Agnolo, Andrea},
       title={{On Dwork cohomology and algebraic \(D\)-modules}},
        date={2003},
   booktitle={{Geometric aspects of Dwork theory}},
   publisher={Walter de Gruyter},
       pages={245\ndash 253},
}

\bib{2000-Dimca-Maaref-Sabbah-Saito-dwork-cohomology-and-algebraic-d-modules}{article}{
      author={Dimca, Alexandru},
      author={Maaref, Fay\c{c}al},
      author={Sabbah, Claude},
      author={Saito, Morihiko},
       title={{Dwork cohomology and algebraic \(\mathscr{D}\)-module}},
        date={2000},
        journal={Math. Ann.},
      volume={318},
      number={1},
       pages={107\ndash 125},
}

\bib{1989-Gelfand-Kapranov-Zelebinski-hypergeometric-functions-and-toral-manifolds}{article}{
      author={Gel'fand, Izrail~Moiseevich},
      author={Zelevinskii, Andrei~Vladlenovich},
      author={Kapranov, Mikhail~Mikhailovich},
       title={Hypergeometric functions and toral manifolds},
        date={1989},
        journal={Funktsional. Anal. i Prilozhen.},
      volume={23},
      number={2},
       pages={94\ndash 106},
}

\bib{1997-Hosono-Lian-Yau-maximal-degeneracy-points-of-gkz-systems}{article}{
      author={Hosono, Shinobu},
      author={Lian, Bong},
      author={Yau, Shing-Tung},
       title={{Maximal degeneracy points of GKZ systems}},
        date={1997April},
        journal={J. Amer. Math. Soc.},
      volume={10},
      number={2},
       pages={427\ndash 443},
}


\bib{2016-Huang-Lian-Yau-Zhu-chain-integral-solutions-to-tautological-systems}{article}{
      author={Huang, An},
      author={Lian, Bong},
      author={Yau, Shing-Tung},
      author={Zhu, Xinwen},
       title={Chain integral solutions to tautological systems},
        date={2016},
     journal={Mathematical Research Letter},
      volume={23},
       pages={1721\ndash 1736},
}

\bib{2016-Huang-Lian-Zhu-period-integrals-and-the-riemann-hilbert-correspondence}{article}{
      author={Huang, An},
      author={Lian, Bong},
      author={Zhu, Xinwen},
       title={{Period integrals and the Riemann--Hilbert correspondence}},
        date={2016},
     journal={Journal of Differential Geometry},
      volume={104},
       pages={325\ndash 369},
}

\bib{1968-Katz-on-the-differential-equations-satisfied-by-period-matrices}{article}{
      author={Katz, Nicholas~M.},
       title={On the differential equations satisfied by period matrices},
        date={1968},
     journal={Institut des Hautes \'{E}tudes Scientifiques. Publications
  Math\'{e}matiques},
      number={35},
       pages={223\ndash 258},
}


\bib{2014-Reichelt-laurent-polynomials-GKZ-hypergeometric-systems-and-mixed-hodge-modules}{article}{
      author={Reichelt, Thomas},
       title={{Laurent polynomials, GKZ-hypergeometric systems and mixed Hodge
  modules}},
        date={2014},
     journal={Compositio Mathematica},
      volume={150},
       pages={911\ndash 941},
}

\bib{2009-Schulze-Walther-hypergeometric-d-modules-and-twisted-gauss-manin-systems}{article}{
      author={Schulze, Mathias},
      author={Walther, Uli},
       title={{Hypergeometric \(D\)-modules and twisted Gauss--Manin systems}},
        date={2009},
        journal={J. Algebra},
      volume={322},
      number={9},
       pages={3392\ndash 3409},
}

\bib{2017-Zhou-gkz-hypergeometric-series-for-the-hesse-pencil-chain-integrals-and-orbifold-singularities}{article}{
      author={Zhou, Jie},
       title={{GKZ hypergeometric series for the Hesse pencil, chain integrals
  and orbifold singularities}},
        date={2017},
        journal={SIGMA Symmetry Integrability Geom. Methods Appl.},
      volume={13},
      number={30},
}

\end{biblist}
\end{bibdiv}

\end{document}